\newtheorem{thrm}{Theorem}
\newtheorem{prop}[thrm]{Proposition}
\newtheorem{lemma}[thrm]{Lemma}
\theoremstyle{definition}
\theoremstyle{remark}
\numberwithin{equation}{section}
\numberwithin{equation}{section}
\title{Geometry of uniqueness varieties for a three-point Pick problem in $\mathbb{D}^3$}
\author{Krzysztof Maciaszek}
\date{April 13, 2022}
\begin{document}
	
	\maketitle
	\let\thefootnote\relax\footnote{The author was supported by the NCN Grant 2017/26/E/ST1/00723.}
	\begin{abstract}
		Motivated by the recent progress of research on extending holomorphic functions defined on subvarieties of classical domains and its connections to the 3-point Pick interpolation, we study a special class of two-dimensional algebraic subvarieties $M_\alpha$ of the unit tridisc, defined as the sets
		$$\lbrace (z_1,z_2,z_3)\in \mathbb{D}^3:\alpha_1z_1+\alpha_2z_2+\alpha_3z_3=\overline{\alpha}_1z_2z_3+\overline{\alpha}_2z_1z_3+\overline{\alpha}_3z_1z_2\rbrace.$$
		In this paper we show that given non-degenerated extremal maximal $3$-point Pick problem there exists an $\alpha$ such that $M_\alpha$ appears as its uniqueness variety. We also describe several geometric properties of $M_\alpha$ and show the biholomorphic equivalence between any two surfaces $M_\alpha$ and $M_\beta$, where the triples $\alpha$ and $\beta$ satisfy the so called triangle inequality.
	\end{abstract}
	
	\section{Introduction}
	Let $\Omega$ be a domain in $\mathbb{C}^n$ and $V\subset \Omega$ an arbitrary subset. Suppose $f:V\to \mathbb{C}$ is a function such that for any point $z\in V$ there exists its open neighbourhood $U\subset \Omega$ and there is a holomorphic function $F\in \mathcal{O}(U)$ such that $F$ coincide with $f$ on $V\cap U$. We call such a function $f$ holomorphic on $V$. By $H^\infty (V)$ we denote the algebra of bounded holomorphic functions on $V$.
	
	The set $V$ is said to have the (norm-preserving) extension property in $\Omega$, if $V\subset \Omega$ and for each function $f\in H^\infty(V)$, there exists $F\in H^\infty (\Omega)$ which admits 
	$$F|_V\equiv f \quad \text{ and } \quad ||F||_\Omega =||f||_V.$$
	
	The norm-preserving extensions of bounded holomorphic functions from subvarieties were studied (to our knowledge) for the first time by Rudin \cite{Rud}. However, the crucial for understanding the vague of the extension problem was the Agler and M$^c$Carthy's paper \cite{AglMc}, where the authors provided a complete solution for subvarieties of the bidisc. They're results gave motivation that links this problem to the Nevanlinna-Pick interpolation and to the Operator Theory. Moreover the tools developed and used (e.g. see \cite{Tho}) had a significant impact on further research. For more about motivations and some history we refer to \cite{AglKosMc} and \cite{KosMc2}.
	
	Although it has been almost two decades, the situation in higher dimensional polydiscs is only partially understood. The results that were particularly motivating to this work were obtained by Kosiński and M$^c$Carthy \cite{KosMc} in the case of tridisc $\mathbb{D}^3$, further developed by Kosiński and Zwonek \cite{KosZw}.
	
	In particular (in \cite{KosMc}) the authors have shown, for the two-dimensional polynomially convex analytic varieties, that extension property implies the existence of some subdomain $D\subset \mathbb{D}^2$ of the bidisc and a holomorphic function $f\in \mathcal{O}(D,\mathbb{D})$ such that $V$ can be written as
	\begin{equation}\label{eqf}
		\lbrace (z_1,z_2,f(z_1,z_2)):(z_1,z_2)\in D\rbrace .
	\end{equation}
	However it is still an open question concerning conditions being at the same time necessary and sufficient for $V$ to be a retract. It is equivalent to the question whether domain $D$ is the whole bidisc.
	
	Recall that a set $V$ is a (holomorphic) retract of $D$ if there exists a holomorphic function $r:D\to V$ which is the identity on $V$. 
	
	It is worth mentioning that not every set of the form (\ref{eqf}) has extension property and in general the question whether a particular (analytic) subset of $\mathbb{D}^3$ admits extension property is difficult. The authors in \cite{KosMc} took attention to the very interesting class of two-dimensional algebraic varieties, denote them by $M_\alpha$, which are not holomorphic retracts of the tridisc but it is still unknown whether they have the extension property for any $\alpha$. We define them as follows. For given $\alpha=(\alpha_1, \alpha_2, \alpha_3)\in \mathbb{C}^3\setminus \lbrace 0 \rbrace$ let $M_\alpha$ to be a set
	\begin{equation}
		\lbrace (z_1,z_2,z_3)\in \mathbb{D}^3:\alpha_1z_1+\alpha_2z_2+\alpha_3z_3=\overline{\alpha}_1z_2z_3+\overline{\alpha}_2z_1z_3+\overline{\alpha}_3z_1z_2\rbrace .
	\end{equation}
	Although the definition is correct for any $\alpha \in \mathbb{C}^3\setminus \lbrace 0 \rbrace$, only a special subfamily requires attention. Indeed, if the inequality $|\alpha_1|+|\alpha_2|\leq |\alpha_3|$ is satisfied (up to a permutation of coordinates), then $M_\alpha$ is biholomorphic to the bidisc $\mathbb{D}^2$ and then it is obviously a retract of the tridisc. Hence a class of varieties we shall consider (denote it $\mathcal{M}$), consists of those sets $M_\alpha$ for which the so called triangle inequality holds, i.e. $|\alpha_{i_1}|+|\alpha_{i_2}|>|\alpha_{i_3}|$ for all permutations of indices.
	
	First in-depth studies of the family $\mathcal{M}$ were carried out in \cite{KosZw}, where the authors described several properties of $M_\alpha$. In particular each such surface is a Carath\'eodory set in $\mathbb{D}^3$ i.e. the Carath\'eodory pseudodistance 
	\begin{equation*}
		c_{M_\alpha}(z,w):=\sup \lbrace \rho (F(z),F(w)):F\in \mathcal{O}(M_\alpha,\mathbb{D})\rbrace
	\end{equation*}
	coincide with the Carath\'eodory distance $c_{\mathbb{D}^3}(z,w)$ for all $z,w\in M_\alpha$. Here, $\rho$ is the hyperbolic metric on $\mathbb{D}$. Moreover it was noted that $M_\alpha$ is stable under automorphisms of the tridisc, which means that for any $\alpha$ satisfying the triangle inequality and any $m\in \operatorname{Aut}(\mathbb{D}^3)$ there is $\beta$ that also satisfies the triangle inequality and $m(M_\alpha)=M_\beta$. In Theorem \ref{thrm1} we show that in fact for any $\alpha$ and $\beta$ admitting the triangle inequality, the surfaces $M_\alpha$ and $M_\beta$ are biholomorphically equivalent, and desired biholomorphism can be settled by an automorphism of the tridisc. 
	
	Note that if $\alpha_3\neq 0$, then setting $a=\frac{\alpha_1}{\overline{\alpha_3}}, b=\frac{\alpha_2}{\overline{\alpha_3}}$ and $\omega=\frac{\overline{\alpha_3}}{\alpha_3}$, we can describe $M_\alpha$ as a graph of a function
	\begin{equation}\label{z3}
		z_3(z_1,z_2)=\omega \frac{az_1+bz_2-z_1z_2}{\overline{b}z_1+\overline{a}z_2-1}.
	\end{equation}
	
	The varieties $M_\alpha$ also have realization as two-dimensional domains
	\begin{equation}
		D_{a,b}=\lbrace (z_1,z_1)\in \mathbb{D}^2: |f_{a,b}(z_1,z_2)|<1\rbrace,
	\end{equation}
	where $a,b>0$, $f_{a,b}(z_1,z_2)=\frac{az_1+bz_2-z_1z_2}{bz_1+az_2-1}$ and the triple $(a,b,1)$ satisfy the triangle inequality.
	It is an interesting feature that the Lempert theorem holds for the domain $D_{a,b}$, although it is not even linearly convex. Hence it is natural to ask if $D_{a,b}$ is biholomorphic to a convex domain or if it can be exhausted by domains biholomorphic to convex domains.
	
	Another reason for studying the family $\mathcal{M}$ is that it coincide with the family of varieties, each of them being a unique solution to the maximal extremal 3-point Pick problem (see Theorem \ref{uniq}). Before we go further, let us recal some selected facts on the Pick interpolation problem in the tridisc $\mathbb{D}^3$.
	
	Set a pairwise distinct points $x,y,z\in \mathbb{D}^3$ and let $\alpha, \beta, \gamma \in \mathbb{D}$ be distinct. Consider a three-point Pick problem on the tridisc
	\begin{equation}\label{pick}
		\left\{\begin{matrix}
			x \mapsto \alpha,\\ 
			y \mapsto \beta, \\
			z \mapsto \gamma.
		\end{matrix}\right.
	\end{equation}
	The goal is to determine the existence of a function $F$ in the closed unit ball of $H^\infty (\mathbb{D}^n)$, satisfying $F(x)=\alpha, F(y)=\beta, F(z)=\gamma$. If such a function exists, it is called an interpolating function. Moreover, if there is no interpolating function with norm less than $1$, we say that the problem is extremal. The problem (\ref{pick}) is called non-degenerate if non of the two-point subproblems is extremal. 
	
	The extremal non-degenerated 3-point Pick problem was solved by Kosiński in \cite{Kos}. In particular, the author proved that all interpolating functions coincide on two-dimensional analytic variety of the special form. In Theorem \ref{uniq} (and preceding discussion) we will show this variety to be precisely one of the members of the family $\mathcal{M}$ and even more, that this is a maximal set where all interpolating functions coincide.
	
	\section{Results}
	Now we shall prove the first main result
	\begin{thrm}\label{thrm1}
		Let $\alpha=(\alpha_1,\alpha_2,\alpha_3)$ and $\beta=(\beta_1,\beta_2,\beta_3)$ both satisfies the triangle inequality. Then there exists $\varphi \in \operatorname{Aut}(\mathbb{D}^3)$ such that $\varphi(M_\alpha)=M_\beta$.
	\end{thrm}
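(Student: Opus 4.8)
The plan is to think of each $M_\alpha$ as the zero set of a multilinear polynomial and to let $\operatorname{Aut}(\mathbb{D}^3)$ act on its coefficients. Writing the defining relation as $\sum_{i,j,k\in\{0,1\}}c_{ijk}\,z_1^{i}z_2^{j}z_3^{k}=0$ (with $z^0:=1$), one has $c_{100}=\alpha_1$, $c_{010}=\alpha_2$, $c_{001}=\alpha_3$, $c_{011}=-\overline{\alpha_1}$, $c_{101}=-\overline{\alpha_2}$, $c_{110}=-\overline{\alpha_3}$ and $c_{000}=c_{111}=0$; equivalently, the array $(c_{ijk})$ has the symmetry $c_{(1-i)(1-j)(1-k)}=-\overline{c_{ijk}}$ together with $c_{000}=0$. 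For $t=(t_1,t_2,t_3)\in\mathbb{D}^3$ put $\varphi_t=(m_{t_1},m_{t_2},m_{t_3})\in\operatorname{Aut}(\mathbb{D}^3)$, where $m_{t_j}(z)=\frac{z-t_j}{1-\overline{t_j}z}$. Substituting $z_j=m_{t_j}^{-1}(w_j)$ and clearing the (zero-free on $\mathbb{D}^3$) denominators turns the equation of $M_\alpha$ into a new multilinear equation whose coefficient array $(c'_{ijk})$ is obtained from $(c_{ijk})$ by contracting with the matrices $\bigl(\begin{smallmatrix}1&\overline{t_j}\\ t_j&1\end{smallmatrix}\bigr)$. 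A direct check shows each such matrix preserves the symmetry $c_{(1-i)(1-j)(1-k)}=-\overline{c_{ijk}}$, so $(c'_{ijk})$ inherits it; hence $c'_{111}=-\overline{c'_{000}}$, and $\varphi_t(M_\alpha)$ is again a member of the family exactly when $c'_{000}=0$. Finally $c'_{000}=\alpha_1t_1+\alpha_2t_2+\alpha_3t_3-\overline{\alpha_1}t_2t_3-\overline{\alpha_2}t_1t_3-\overline{\alpha_3}t_1t_2$, so $c'_{000}=0$ means precisely $t\in M_\alpha$. Thus \emph{for every $t\in M_\alpha$ the automorphism $\varphi_t$ maps $M_\alpha$ onto some $M_{\alpha'(t)}$}, with $\alpha'(t)=(c'_{100},c'_{010},c'_{001})$ an explicit polynomial in $t$; and $\alpha'(t)$ again obeys the triangle inequality, since otherwise $M_{\alpha'(t)}$ would be biholomorphic to $\mathbb{D}^2$ and a holomorphic retract of $\mathbb{D}^3$, forcing the $\operatorname{Aut}(\mathbb{D}^3)$-equivalent set $M_\alpha$ to be a retract too, contrary to \cite{KosMc}.

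Next I would kill the phases. The torus automorphisms $z_j\mapsto\eta_jz_j$, $|\eta_j|=1$, send $M_\alpha$ to $M_\beta$ with $\beta_j=\mu\alpha_j\eta_j$ and $\overline{\mu}/\mu=\eta_1\eta_2\eta_3$; choosing the $\eta_j$ suitably and rescaling by a positive constant, every $M_\alpha$ with $\alpha$ satisfying the triangle inequality is $\operatorname{Aut}(\mathbb{D}^3)$-equivalent to $M_{(a,b,1)}$ for some $(a,b)$ in the open, connected region $R=\{(a,b):a,b>0,\ |a-b|<1<a+b\}$. It therefore suffices to prove that the relation ``$M_{(a,b,1)}$ is $\operatorname{Aut}(\mathbb{D}^3)$-equivalent to $M_{(a',b',1)}$'' has a single class on $R$, and for that it is enough to show each class is open.

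Fix $(a,b)\in R$, put $\alpha=(a,b,1)$, and consider $\Phi\colon M_\alpha\to R$ sending $t$ to the positive normalisation $\bigl(|\alpha'_1(t)|/|\alpha'_3(t)|,\ |\alpha'_2(t)|/|\alpha'_3(t)|\bigr)$ of $\alpha'(t)$. Since $0\in M_\alpha$, $\varphi_0=\mathrm{id}$ and $\alpha'(0)=\alpha$, we have $\Phi(0)=(a,b)$, and it remains to see that $\Phi$ is a submersion at $0$. Computing the differential of $\alpha'(\cdot)$ at $0$ on $T_0M_\alpha=\{\xi\in\mathbb{C}^3:a\xi_1+b\xi_2+\xi_3=0\}$ from the explicit formula, and then passing to the normalisation, one finds that $d\Phi|_0$ is a real linear map to $\mathbb{R}^2$ represented, in the variables $(\re\xi_1,\re\xi_3)$, by a $2\times2$ matrix whose determinant equals, up to a positive factor, $(1-a-b)(1-a+b)(1+a-b)(1+a+b)$; on $R$ the first factor is negative and the other three positive, so this is nonzero. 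Hence $\Phi$ is a submersion at $0$ and its image contains a neighbourhood of $(a,b)$; running the same argument at an arbitrary point of a given class shows the class is open, so by connectedness of $R$ there is only one class, and unwinding the two reductions produces the desired $\varphi\in\operatorname{Aut}(\mathbb{D}^3)$ with $\varphi(M_\alpha)=M_\beta$.

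The step carrying the real weight is the differential computation in the last paragraph: one must verify both that the transformed array stays inside the family — pleasantly, this is the single clean condition $t\in M_\alpha$ — and that the induced map on triangle shapes is a genuine local submersion, with the triangle inequality reappearing exactly as non-degeneracy of the Jacobian. An alternative to the connectedness argument would be to classify directly the orbits of the arrays $(c_{ijk})$ under the relevant real form of $GL_2\times GL_2\times GL_2$ by a hyperdeterminant-type invariant, but keeping track of the real structure there looks more delicate than the soft argument above.
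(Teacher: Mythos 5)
Your argument is correct, and I checked the two computational claims you defer: the contraction matrices $\bigl(\begin{smallmatrix}1&\overline{t_j}\\ t_j&1\end{smallmatrix}\bigr)$ do preserve the symmetry $c_{(1-i)(1-j)(1-k)}=-\overline{c_{ijk}}$, so $\varphi_t(M_\alpha)=M_{\alpha'(t)}$ exactly when $t\in M_\alpha$; and at $\alpha=(a,b,1)$ the differential of the normalised shape map on $T_0M_\alpha$, written in the variables $(\re\xi_1,\re\xi_3)$, has determinant $\tfrac1b(1-a-b)(1-a+b)(1+a-b)(1+a+b)$, which is indeed nonzero on $R$. However, your proof is organised differently from the paper's. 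The paper also conjugates by M\"obius maps based at a point of the variety and kills phases by coordinate rotations (your torus step is the paper's linear system $At^T=s^T$ up to replacing $\eta_j$ by $\overline{\eta_j}$), but it fixes the single source $M_{(1,1,1)}$, parametrises by real $(\lambda,\mu)$, and proves that the image of $(\lambda,\mu)\mapsto(\mathcal A,\mathcal B)$ is both open (a Jacobian computation) and closed in the component $\mathcal W_+$, the closedness requiring a case-by-case analysis of boundary limits. You instead show that every $\operatorname{Aut}(\mathbb D^3)$-equivalence class in the normalised region $R$ is open, via a submersion at the origin over an arbitrary base point $(a,b)$, and conclude by connectedness of $R$; this avoids the closedness/boundary analysis entirely, at the price of computing the Jacobian at a general $(a,b)$ rather than only along the symmetric family, and your coefficient-array formalism packages the key invariance statement more cleanly than the paper's direct substitution. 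One small remark: the appeal to \cite{KosMc} (non-retractness of members of $\mathcal M$) to keep $\alpha'(t)$ in the triangle-inequality region is not actually needed, since for openness of a class you only use image points near $(a,b)$, which lie in $R$ by continuity because $(a,b,1)$ satisfies the strict inequalities.
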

	\begin{proof}
		It is enough to show that $M_{(1,1,1)}$ is biholomorphically equivalent to any $M_{\alpha}$, where the triple $\alpha$ does satisfy the triangle inequality. Assume $m(z_1,z_2,z_3):=(m_{\lambda}(z_1), m_{\mu}(z_2), m_{\gamma}(z_3))$ is the automorphism of the tridisc $\mathbb{D}^3$, consisting of M\"obius maps, where $\lambda, \mu, \gamma$ are real numbers and the point $(\lambda, \mu,\gamma)$ belongs to $M_{(1,1,1)}$. In particular $\gamma=\frac{\lambda+\mu-\lambda\mu}{\lambda+\mu-1}$. Therefore, for fixed such a triple the point $\left (\lambda, \mu, \frac{\lambda+\mu - \lambda \mu}{\lambda+\mu-1}\right )\in M_{(1,1,1)}$ is sent to the origin and the variety $M_{(1,1,1)}$ is mapped to the set of points satisfying the following equality
		\begin{multline*}
			\frac{\lambda-z_1}{1-\lambda z_1}+\frac{\mu-z_2}{1- \mu z_2}+\frac{\gamma-z_3}{1-\gamma z_3}
			\\
			=\frac{\lambda-z_1}{1-\lambda z_1}\frac{\mu-z_2}{1-\mu z_2}
			+\frac{\lambda-z_1}{1-\lambda z_1}\frac{\gamma-z_3}{1-\gamma z_3}
			+\frac{\mu-z_2}{1-\mu z_2}\frac{\gamma-z_3}{1-\gamma z_3}.
		\end{multline*}
		Simple calculations lead to the following equation:
		\begin{multline*}
			A(\lambda,\mu)z_1+B(\lambda,\mu)z_2+C(\lambda,\mu)z_3
			\\=A(\lambda,\mu)z_2z_3+B(\lambda, \mu)z_1z_3+C(\lambda, \mu)z_1z_2.
		\end{multline*}
		Here
		\begin{align*}
			A(\lambda, \mu)=(1-\lambda^2)(\mu+\gamma-1),
			\\
			B(\lambda, \mu)=(1-\mu^2)(\lambda+\gamma-1),
			\\
			C(\lambda, \mu)=(1-\gamma^2)(\lambda+\mu-1).
		\end{align*}
		Substitute the explicit formula for $\gamma=\gamma (\lambda, \mu)$ taken from the assumption, so we get
		\begin{align*}
			A(\lambda,\mu)&=\frac{(1-\lambda^2)(\mu^2-\mu+1)}{\lambda+\mu-1},
			\\
			B(\lambda,\mu)&=\frac{(1-\mu^2)(\lambda^2-\lambda+1)}{\lambda+\mu-1},
			\\
			C(\lambda,\mu)&=\frac{1+2(\lambda+\mu)(\lambda \mu-1)-\lambda^2\mu^2}{\lambda+\mu-1}.
		\end{align*}
		Set $\mathcal{A}(\lambda,\mu)=\frac{(1-\lambda^2)(\mu^2-\mu+1)}{1+2(\lambda+\mu)(\lambda \mu-1)-\lambda^2\mu^2}$ and $\mathcal{B}(\lambda,\mu)=\frac{(1-\mu^2)(\lambda^2-\lambda+1)}{1+2(\lambda+\mu)(\lambda \mu-1)-\lambda^2\mu^2}$. 
		Both functions are well defined on the set 
		$$V=\left \lbrace (\lambda,\mu)\in (-1,1)^2:\left | \frac{\lambda+\mu-\lambda \mu}{\lambda+\mu-1}\right |<1 \right \rbrace.$$ 
		Consider a set 
		\begin{equation*}
			\mathcal{W} := \lbrace (x_1,x_2)\in \mathbb{R}^2: |x_1|+|x_2|>1, |x_1|+1>|x_2|, |x_2|+1>|x_1|\rbrace.
		\end{equation*} 
		We shall show that the subset of $\mathcal{W}$, defined as 
		$$D=\left \lbrace (\mathcal{A}(\lambda,\mu), \mathcal{B}(\lambda,\mu)):(\lambda,\mu)\in V\right \rbrace ,$$ 
		is open-closed in connected component of $\mathcal{W}$. Moreover, the mapping that defines $D$ is continuous on $V$ and, since $(\mathcal{A}(0,0),\mathcal{B}(0,0))$ belongs to a connected component $\mathcal{W}_+=\mathcal{W} \cap \lbrace x_1>0, x_2>0\rbrace$. From this it will follow that $D=\mathcal{W}_+$.
		
		To prove that $D$ is open let take a map $\Psi:(\lambda, \mu)\mapsto (\mathcal{A}(\lambda,\mu),\mathcal{B}(\lambda,\mu))$. We will show it is a local diffeomorphism. Instead of $\Psi$, we can consider 
		$$\psi:(\lambda, \mu)\mapsto (\mathcal{A}(\lambda,\mu)+\mathcal{B}(\lambda,\mu),\mathcal{A}(\lambda,\mu)-\mathcal{B}(\lambda,\mu)).$$
		
		Note, that $(\mathcal{A}(\lambda,\mu)+\mathcal{B}(\lambda,\mu),\mathcal{A}(\lambda,\mu)-\mathcal{B}(\lambda,\mu))=\left ( \frac{\lambda+\mu-2\lambda \mu -2}{2\lambda+2\mu-\lambda \mu-1}, \frac{\lambda-\mu}{1-\lambda \mu}\right )$. The Jacobian matrix
		\begin{equation*}
			\begin{bmatrix}
				\frac{3(1-\mu^2)}{(2\lambda+2\mu-\lambda \mu-1)^2} & \frac{3(1-\lambda^2)}{(2\lambda+2\mu-\lambda \mu-1)^2}\\
				\frac{1-\mu^2}{(1-\lambda \mu)^2} & \frac{-1+\lambda^2}{(1-\lambda \mu )^2}
			\end{bmatrix}
		\end{equation*}
		has non-zero determinant at each point of $V$. From this fact it immediately follows that $D$ is open.
		
		Now we will show $D$ is also closed. We shall consider several cases.
		\\
		Suppose $(\lambda, \mu)$ goes to $(-1^+,-1^+)$. Then $\mathcal{A}(\lambda,\mu)+\mathcal{B}(\lambda,\mu)$ converge to $1$ and hence $(\mathcal{A}(\lambda,\mu),\mathcal{B}(\lambda,\mu))$ in the limit approaches a boundary point of $\partial \mathcal{W}$.
		\\
		Let now $\lambda \to 1^-$ and $\mu \to -1^+$. Then the difference $\mathcal{A}(\lambda,\mu)-\mathcal{B}(\lambda,\mu)$ goes to $1$. Similarly, it converge to $-1$ provided that $\lambda \to -1^+$ and $\mu \to 1^-$. In both cases we would reach the boundary point of $\mathcal{W}$.
		\\
		Note that the boundary of $V$ is a set 
		$$\partial V=\left \lbrace (\lambda,\mu)\in (0,1)^2: \mu = \frac{2\lambda - 1}{\lambda-2}\right \rbrace \cup \lbrace -1 \rbrace \times [-1,1] \cup [-1,1]\times \lbrace -1 \rbrace.$$
		
		Assuming $\lambda \to \lambda_0\in (-1,1)$ and $\mu \to \frac{2\lambda-1}{\lambda-2}$, we easily see that $\mathcal{A}(\lambda,\mu)+\mathcal{B}(\lambda,\mu)$ blows up to infinity. If $\lambda$ goes to $-1^+$ and $\mu \to (-1,1)$ then this sum converge to $1$. By symmetry we have the same result after permuting $\lambda$ with $\mu$.
		
		In each case we have obtained that $(\mathcal{A}(\lambda,\mu),\mathcal{B}(\lambda,\mu))$ goes to the boundary of $\mathcal{W}_+$ whenever $(\lambda,\mu)$ converge to the boundary of $V$.
		
		In order to finish the proof we will show that for any $\alpha=(\alpha_1,\alpha_2,\alpha_3)\in \mathbb{C}^3$ satisfying the triangle inequality there exists an automorphism $\varphi \in \operatorname{Aut}(\mathbb{D}^3)$ such that $\varphi(M_{\alpha})=M_{|\alpha|}$. Compose $M_\alpha$ with the rotations of the coordinates
		$$\mathbb{D}^3\ni (z_1,z_2,z_3)\overset{\phi}{\mapsto} \left (\omega_1 z_1,\omega_2 z_2,\omega_3 z_3\right ).$$
		Here $\omega_j$ are in the unit circle $\mathbb{T}$ for $j=1,2,3$. 
		The variety $\phi^{-1}(M_\alpha)$ can be rewritten as the set of points $(z_1,z_2,z_3)\in \mathbb{D}^3$ that satisfy
		\begin{multline*}
			\gamma \alpha_1\omega_1z_1+\gamma \alpha_2 \omega_2z_2+\gamma \alpha_3\omega_3z_3
			\\
			=\gamma \overline{\alpha}_1\omega_2\omega_3z_2z_3+\gamma \overline{\alpha}_2\omega_1\zeta_3z_1z_3+\gamma \overline{\alpha}_3\omega_1\omega_2z_1z_2.
		\end{multline*}
		We can see that $\phi^{-1}(M_\alpha)=M_\beta$ for some triple $\beta=(\beta_1,\beta_2,\beta_3)$. Indeed, let $\gamma^2=\frac{1}{\omega_1\omega_2\omega_3}$, then $\beta_j=\frac{\eta_j}{\eta_k\eta_l}a_j$, whenever $j\neq k \neq l$ and $\eta_j$ being any square root of $\omega_j$. Write $\alpha_j=|\alpha_j|e^{is_j}$ and $\eta_j=e^{it_j}$. In order to obtain $\beta=|\alpha|$ it is enough to choose $t=(t_1,t_2,t_3)$ to be a solution of
		$$At^T=s^T,$$
		where  
		\begin{equation*}
			A=\begin{bmatrix}
				1 & -1 & -1\\ 
				-1 & 1 & -1\\ 
				-1 & -1 & 1
			\end{bmatrix},  \quad s=(s_1,s_2,s_3).
		\end{equation*}
		The equivalence between any two $M_{\alpha}$ and $M_{\beta}$ we can now see on the diagram
		\[
		\begin{tikzcd}
			M_{\alpha} \arrow{r}{\varphi_1} \arrow[swap]{ddrr}{\varphi_4\circ ... \circ \varphi_1} & M_{|\alpha|} \arrow{r}{\varphi_2} & M_{(1,1,1)} \arrow{d}{\varphi_3} \\
			& & M_{|\beta|} \arrow{d}{\varphi_4}\\
			& & M_{\beta},
		\end{tikzcd}
		\] 
		where $\varphi_j$ for $j=1,...,4$ are appropriate biholomorphisms constructed as above.
		
		This finishes the proof.
	\end{proof}
	
	Consider a non-degenerate maximal extremal 3-point Pick problem. Because the group of automorphisms acts transitively on $\mathbb{D}^3$, we may consider the following situation
	\begin{equation}\label{NP}
		\left \{\begin{matrix}
			0 \mapsto 0;\\
			x \mapsto \alpha;\\
			y \mapsto \beta.
		\end{matrix} \right.
	\end{equation}
	
	For the next result we need some preparation. We will use the so called magic functions. The notion was formally introduced in \cite{AglYou} to study the geometry of domains (automorphisms, Carath\'eodory problem). These functions also play crucial role in the construction of the solution of 3-point Pick problem (cf. \cite{Kos}). In the bidisc we define magic functions as
	$$\Phi_{a,\eta}(x,y)=\frac{ax+(1-a)y+\eta xy}{1+\eta (1-a)x+\eta ay},$$
	where $a\in [0,1]$ and $\eta \in \mathbb{T}$. Kosiński proved \cite{Kos} that (\ref{NP}) can be interpolated by a function $F(z_1,z_2,z_3)=G_1(G_2(z_1,z_2),z_3)$ with $z\in \mathbb{D}^3$, where $G_1,G_2$ are magic functions. Moreover, there exist points $a_1, a_2, a_3 \in \mathbb{D}$, which are not co-linear, and such that an analytic disc
	$$\lambda \mapsto (\lambda m_{a_1} (\lambda), \lambda m_{a_2}(\lambda), \lambda m_{a_3} (\lambda))$$
	passes through the nodes. There is also a point $\gamma \in \mathbb{D}$ being a convex combination of $a_1, a_2, a_3$ for which 
	\begin{equation}\label{eq:Fa} F(\lambda m_{a_1} (\lambda), \lambda m_{a_2}(\lambda), \lambda m_{a_3} (\lambda))=\lambda m_{\gamma}(\lambda) \text{ for } \lambda \in \mathbb{D}.
	\end{equation}
	Therefore the Pick problem can be written as 
	\begin{equation}\label{NP2}
		\left\{\begin{matrix}
			0\mapsto 0;\\ 
			(xm_{a_1}(x),x m_{a_2}(x),x m_{a_3}(x))\mapsto xm_{\gamma}(x);\\
			(ym_{a_1}(y),y m_{a_2}(y),y m_{a_3}(y))\mapsto ym_{\gamma}(y).
		\end{matrix}\right.
	\end{equation}
	It was also shown in \cite{Kos} that \eqref{eq:Fa} and the Schwarz lemma implies
	that 
	\begin{equation}\label{eq:FA}
		F(\lambda m_{ta_1} (\lambda), \lambda m_{ta_2}(\lambda), \lambda m_{ta_3} (\lambda))=\lambda m_{t\gamma}(\lambda) \text{ for } \lambda \in \mathbb{D},\ t\in [0,1].
	\end{equation}
	This in particular (see also \cite{KosZw}) shows the following:
	\begin{lemma}\label{lem:Fa} The uniqueness variety (the set of points where all the interpolating functions coincide) contains a 3-real dimensional surface 
		$$\mathcal N_{(a_1, a_2, a_3)}:=\lbrace (\lambda m_{ta_1} (\lambda),\lambda m_{ta_2} (\lambda),\lambda m_{ta_3} (\lambda)):\lambda \in \mathbb{D}, t\in (0,1)\rbrace.$$
	\end{lemma}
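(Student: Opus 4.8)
The plan is to deduce the statement from a rigidity property of finite Blaschke products restricted to the discs $\phi_t\colon\lambda\mapsto(\lambda m_{ta_1}(\lambda),\lambda m_{ta_2}(\lambda),\lambda m_{ta_3}(\lambda))$, $t\in(0,1)$. Since the interpolant $F$ built in \cite{Kos} already satisfies \eqref{eq:FA}, i.e.\ $F\circ\phi_t=\lambda m_{t\gamma}(\lambda)$, it suffices to prove that \emph{every} interpolant $\widetilde F$ of \eqref{NP2} obeys the same identity on $\mathbb{D}$ for each $t\in(0,1)$; granting this, all interpolating functions coincide on $\mathcal N_{(a_1,a_2,a_3)}$, and only the dimension count remains.

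I would first anchor the argument at $t=1$, the one disc passing through all three nodes. The composition $\widetilde F\circ\phi_1\colon\mathbb{D}\to\overline{\mathbb{D}}$ interpolates $0,\,xm_\gamma(x),\,ym_\gamma(y)$ at $0,x,y$ and is non-constant (at least one target is non-zero), hence it omits $\partial\mathbb{D}$ and is a Schur solution of this scalar three-point problem. That problem is also solved by $F\circ\phi_1=\lambda m_\gamma(\lambda)$, a Blaschke product of degree $2$; the $3\times3$ Pick matrix of a degree-$2$ Blaschke product at three distinct points is the Gram matrix of the reproducing kernels, at those points, of its ($2$-dimensional) model space, hence positive semidefinite of rank $2$, in particular singular. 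As a scalar Pick problem with singular Pick matrix has a unique solution, $\widetilde F\circ\phi_1=\lambda m_\gamma(\lambda)$.

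Next I would propagate this to $t\in(0,1)$. Since $|a_j|<1$, the map $(t,\lambda)\mapsto\phi_t(\lambda)$ is holomorphic into $\mathbb{D}^3$ on a neighbourhood of $\overline{\mathbb{D}}\times\mathbb{D}$, so $g(t,\lambda):=\widetilde F(\phi_t(\lambda))$ is holomorphic and bounded by $1$ there, vanishes on $\{\lambda=0\}$, and equals $\lambda m_\gamma(\lambda)$ on $\{t=1\}$ by the previous step. For the constructed $F$ the extension to all $t$ is exactly \eqref{eq:FA}, obtained in \cite{Kos} from its $t=1$ instance and the Schwarz lemma. To run the analogous argument for an arbitrary $\widetilde F$ one uses that, for each fixed $t\in(0,1)$, $\phi_t$ is the distinguished disc of the \emph{extremal} scaled subproblem $0\mapsto0$, $\phi_t(x)\mapsto xm_{t\gamma}(x)$, $\phi_t(y)\mapsto ym_{t\gamma}(y)$ — extremal with unique solution $\lambda m_{t\gamma}(\lambda)$ by precisely the Blaschke–Pick-rank computation of the previous paragraph — so $\widetilde F\circ\phi_t$ is forced to equal $\lambda m_{t\gamma}(\lambda)$ as soon as one knows that $\widetilde F$ also interpolates the scaled data. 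That every interpolant of the three original nodes simultaneously respects all of the scaled data is the structural content of Kosiński's solution of the extremal three-point problem (\cite{Kos}; see also \cite{KosZw}), and it is the step I expect to be the real obstacle: the soft Schwarz–Pick reasoning above controls only the disc $\phi_1$, and pinning $\widetilde F$ down on the rest of the pencil $(\phi_t)_{t\in(0,1)}$ seems to require the full fine structure of the extremal problem.

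Finally I would verify that $\mathcal N_{(a_1,a_2,a_3)}$ is genuinely $3$-real-dimensional. As $(t,\lambda)\mapsto\phi_t(\lambda)$ is holomorphic in $\lambda$, its real differential has rank at most $3$; a short expansion near $\lambda=0$ shows that $\partial_t\phi_t-\tfrac{\lambda}{t}\,\partial_\lambda\phi_t$ is, to leading order in $\lambda$, a non-zero multiple of $(1,1,1)$, while $\partial_\lambda\phi_t\to t(a_1,a_2,a_3)$ as $\lambda\to0$, so the two are $\mathbb{C}$-independent once the $a_j$ are not all equal — a consequence of their non-colinearity. Hence the map is a real immersion for small $\lambda\neq0$ and $\dim_{\mathbb{R}}\mathcal N_{(a_1,a_2,a_3)}=3$.
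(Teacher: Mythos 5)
Your $t=1$ step is correct, and it is essentially the standard argument that underlies the paper's discussion: $\lambda\mapsto\lambda m_\gamma(\lambda)$ is a Blaschke product of degree $2$, so the Pick matrix of the scalar data $(0,0)$, $(x,xm_\gamma(x))$, $(y,ym_\gamma(y))$ is the Gram matrix of three model-space kernels in a two-dimensional space, hence positive semidefinite and singular; uniqueness for singular scalar Pick problems then forces $\widetilde F\circ\phi_1(\lambda)=\lambda m_\gamma(\lambda)$ for every interpolant $\widetilde F$ of \eqref{NP2}, i.e. \eqref{eq:Fa} holds for all interpolants. The dimension count at the end is also fine (the paper does not even record it).

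The genuine gap is the propagation to $t\in(0,1)$, which is the actual content of the lemma, and the route you sketch for it is circular: the points $\phi_t(x)$, $\phi_t(y)$ are not nodes of \eqref{NP2}, so nothing a priori forces an arbitrary interpolant $\widetilde F$ to take the values $xm_{t\gamma}(x)$, $ym_{t\gamma}(y)$ there --- that is precisely (two points' worth of) the identity \eqref{eq:FA} you are trying to establish, so invoking uniqueness of the ``scaled'' scalar problem assumes what is to be proved. You acknowledge this and defer to \cite{Kos}, \cite{KosZw}, doubting that a soft argument exists; but the mechanism the paper relies on is soft and removes exactly the obstacle you flag. As stated before the lemma, \cite{Kos} proves that \eqref{eq:Fa} together with the Schwarz lemma already implies \eqref{eq:FA}, and this implication takes as input only that the function belongs to the closed unit ball of $H^\infty(\mathbb{D}^3)$, vanishes at the origin, satisfies the $t=1$ identity, and that $\gamma$ is a convex combination of $a_1,a_2,a_3$; it uses no further interpolation data and no fine structure of the extremal problem beyond what your first step provides. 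Consequently it applies verbatim to every interpolant $\widetilde F$, not only to Kosiński's constructed solution, and combined with your $t=1$ anchor it yields the lemma. As written, your proposal neither reproduces this Schwarz-lemma propagation nor cites it in a usable form, so the key step remains unproved.
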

	\begin{lemma}\label{lem:FA}  $\mathcal N_{(a_1, a_2, a_3)}$ is a subset of $M_\alpha$ for properly chosen $\alpha$.
	\end{lemma}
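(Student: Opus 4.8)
The plan is to write $\alpha$ down explicitly in terms of $a_1,a_2,a_3$ and then verify the defining equation of $M_\alpha$ directly along the given parametrization of $\mathcal N_{(a_1,a_2,a_3)}$.

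First I would substitute $z_j=\lambda m_{ta_j}(\lambda)=\frac{\lambda(ta_j-\lambda)}{1-t\overline{a}_j\lambda}$ into the equation $\sum_j\alpha_jz_j=\sum_j\overline{\alpha}_j\prod_{k\ne j}z_k$. Putting $n_j:=ta_j-\lambda$ and $D_j:=1-t\overline{a}_j\lambda$, clearing the common denominator $D_1D_2D_3$ and cancelling one factor of $\lambda$, the equation becomes the polynomial identity (in $\lambda$, with $t$ a parameter)
$$\sum_{j}\alpha_j\,n_jD_kD_l=\lambda\sum_{j}\overline{\alpha}_j\,n_kn_lD_j,\qquad\{j,k,l\}=\{1,2,3\}.$$
Comparing the top-degree coefficient in $\lambda$ on the right forces $\sum_j\alpha_ja_j=0$, and the lowest-order comparison forces $\sum_j\alpha_j=0$; the solutions of this linear system are spanned by $\alpha=(a_2-a_3,\ a_3-a_1,\ a_1-a_2)$, which is therefore the natural candidate.

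With this $\alpha$ the verification is short. Since, with indices taken cyclically, $n_k-n_l=t(a_k-a_l)=t\alpha_j$ and $D_l-D_k=t\lambda(\overline{a}_k-\overline{a}_l)=t\lambda\overline{\alpha}_j$, both sides of the displayed identity equal $\tfrac{\lambda}{t}$ times, respectively,
$$L:=\sum_j n_j(n_k-n_l)D_kD_l\qquad\text{and}\qquad R:=\sum_j(D_l-D_k)n_kn_lD_j,$$
and $L=R$ is a formal identity in the six indeterminates $n_1,n_2,n_3,D_1,D_2,D_3$ — expanding each side produces the same six monomials. Since $|\lambda m_{ta_j}(\lambda)|<1$ for $\lambda\in\mathbb{D}$ and $t\in(0,1)$, every point of $\mathcal N_{(a_1,a_2,a_3)}$ lies in $\mathbb{D}^3$ and satisfies the equation, so $\mathcal N_{(a_1,a_2,a_3)}\subset M_\alpha$. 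I would add that $\alpha$ is a complex scalar multiple of a real vector exactly when $a_1,a_2,a_3$ are collinear, and combining this with $\alpha_1+\alpha_2+\alpha_3=0$ one checks that for non-collinear $a_1,a_2,a_3$ all inequalities $|\alpha_{i_1}|+|\alpha_{i_2}|>|\alpha_{i_3}|$ are strict, so in fact $M_\alpha\in\mathcal{M}$.

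There is no deep obstacle: the statement reduces to an elementary algebraic identity, and — worth noting — the point $\gamma$ and the relation \eqref{eq:FA} are not used at all. The only genuinely non-obvious step is guessing the correct $\alpha$, for which the degree-counting argument above is the systematic route; once it is in hand, the equality $L=R$ is immediate, provided one keeps the cyclic index convention consistent throughout. One should also remark that the manipulations used (clearing denominators, dividing by $\lambda$) are harmless on $\mathbb{D}^3$, since $D_j\ne0$ there and the equation holds trivially at $\lambda=0$.
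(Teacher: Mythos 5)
Your proof is correct, and I checked the key step: with $\alpha=(a_2-a_3,\,a_3-a_1,\,a_1-a_2)$ and the paper's convention $m_a(\lambda)=\frac{a-\lambda}{1-\overline{a}\lambda}$, the two sums $L$ and $R$ do expand into the same six monomials, so every point $(\lambda m_{ta_1}(\lambda),\lambda m_{ta_2}(\lambda),\lambda m_{ta_3}(\lambda))$ satisfies $\sum_j\alpha_j z_j=\sum_j\overline{\alpha}_j z_k z_l$. Your route differs from the paper's: there the parameters are eliminated instead — $\lambda t$ and $\lambda^2$ are solved from the first two coordinates and substituted into the third, which exhibits $z_3$ as the graph function \eqref{z3} with $a=\frac{a_3-a_2}{\overline{a_2}-\overline{a_1}}$ and $b=\frac{a_1-a_3}{\overline{a_2}-\overline{a_1}}$; after dividing your $\alpha$ by $\overline{\alpha}_3$ this is the same variety, so the two arguments identify the same $M_\alpha$. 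What your version buys is an explicit, symmetric formula for $\alpha$, a purely formal polynomial verification (no division by expressions such as $a_1-a_2+\overline{a_1}z_1-\overline{a_2}z_2$ that could vanish), and the correct extra remark that non-collinearity of $a_1,a_2,a_3$ together with $\alpha_1+\alpha_2+\alpha_3=0$ yields the strict triangle inequality, i.e.\ $M_\alpha\in\mathcal{M}$; the paper's version instead produces the graph representation directly, which is the form reused later in the proof of Theorem \ref{uniq}. Three small points, none affecting correctness: after cancelling one factor of $\lambda$ both sides equal $\tfrac1t L$ and $\tfrac1t R$, not $\tfrac{\lambda}{t}$ times them; the condition $\sum_j\alpha_j=0$ comes from the lowest order in $t$ (the $\lambda^0$ coefficient in $\lambda$ gives $\sum_j\alpha_j a_j=0$ again); and your two linear conditions determine $\alpha$ only up to a complex scalar, while the defining equation of $M_\alpha$ is sensitive to the phase (replacing $\alpha$ by $i\alpha$ destroys it), so the direct verification you carry out is genuinely needed and not a mere formality.
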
 
	\begin{proof}To see it let
		$z_1=\lambda m_{a_1}(\lambda)$ and $z_2=\lambda m_{a_2}(\lambda)$. Then 
		$$\lambda t = \frac{z_1-z_2}{a_1-a_2+\overline{a_1}z_1-\overline{a_2}z_2}, \quad \lambda^2=\frac{a_2 z_1-a_1 z_2+(\overline{a_2}-\overline{a_1})z_1z_2}{a_1-a_2+\overline{a_1}z_1-\overline{a_2}z_2}.$$
		Now substitute these equalities in the third coordinate, so we get
		$$\lambda m_{a_3}(\lambda)=\omega \frac{\alpha z_1+\beta z_2- z_1z_2}{\overline{\beta}z_1+\overline{\alpha}z_2-1},$$
		where $\omega = -\frac{\overline{a_2}-\overline{a_1}}{a_2-a_1}$, $\alpha = \frac{a_3-a_2}{\overline{a_2}-\overline{a_1}}$ and $\beta = \frac{a_1-a_3}{\overline{a_2}-\overline{a_1}}$. This is exactly the form (\ref{z3}).
		
	\end{proof}
	
	Recall, for the situation in the bidisc it was previously known (cf. \cite{AglMc2}) that the uniqueness variety for $N$-point extremal Pick problem is either the whole bidisc or it contains at least a distinguished variety through the nodes. If $N=3$ it was shown in \cite{AglMc3} that if the uniqueness variety is not the whole bidisc, then the minimal extremal problem admits a solution being a function depending only of one coordinate.
	
	In the next result we show that every uniqueness variety for the maximal extremal non-degenerated 3-point Pick problem in $\mathbb{D}^3$ is a member of family $\mathcal{M}$. 
	
	For given data (\ref{NP2}) we can represent $\gamma$ by writing $\gamma=tb+(1-t)a_3$, where $b=sa_1+(1-s)a_2$. Then we construct (cf. \cite{Kos}) an interpolating function as $F_1(z_1,z_2,z_3)=\Phi_{t,\omega}(\Phi_{s,\nu}(z_1,z_2),z_3)$, with $\nu=\frac{\overline{a_2}-\overline{a_1}}{a_2-a_1}$ and $\omega=\frac{\overline{a_3}-\overline{b}}{a_3-b}$.
	
	\begin{center}
		\begin{tikzpicture}
			\tkzDefPoint(0,0){a_1}
			\tkzLabelPoints[below,left](a_1)
			\tkzDefPoint(2.25,3.320718914){a_3}
			\tkzLabelPoints[above](a_3)
			\tkzDefPoint(4.5,0){a_2}
			\tkzLabelPoints[below,right](a_2)
			\tkzDrawSegments(a_1,a_3 a_3,a_2 a_1,a_2)
			
			\tkzDefPoint(2,0){a}
			\tkzInterLL(a_1,a_2)(a_3,a) \tkzGetPoint{b}
			\tkzLabelPoints[below](b)
			\tkzDrawSegment(a_3,b)
			
			\tkzDefLine[bisector](a_3,a_2,a_1)\tkzGetPoint{a'}
			\tkzInterLL(a_3,a_1)(a_2,a') \tkzGetPoint{b'}
			\tkzLabelPoints[left](b')
			\tkzDrawSegment(a_2,b')
			
			\tkzDefPoint(2.15,1.05){g}
			\tkzLabelPoint[below,left](g){$\gamma$}
			
			\tkzDefPoint(0.8,0){s1}
			\tkzLabelPoint[below](s1){$1-s$}
			\tkzDefPoint(3.2,-0.1){s2}
			\tkzLabelPoint[below](s2){$s$}
			\tkzDefPoint(2.55,0.8){t1}
			\tkzLabelPoint[below](t1){$1-t$}
			\tkzDefPoint(2.35,2.1){t2}
			\tkzLabelPoint[below](t2){$t$}
		\end{tikzpicture}
	\end{center}
	
	Similarly, for the same data we can write $\gamma=t'b'+(1-t')a_2$ where $b'=s'a_1+(1-s')a_3$. Set $\nu'=\frac{\overline{a_3}-\overline{a_1}}{a_3-a_1}$ and $\omega=\frac{\overline{a_2}-\overline{b'}}{a_2-b'}$. The corresponding interpolating function is then $F_2(z_1,z_2,z_3)=\Phi_{t',\omega'}(\Phi_{s',\nu'}(z_1,z_3),z_2)$.
	
	\begin{thrm}\label{uniq}
		The family $\mathcal{M}$ coincide with family of uniqueness varieties for extremal, non-degenerated, strictly 3-dimensional three-point Pick problems in $\mathbb{D}^3$. 
	\end{thrm}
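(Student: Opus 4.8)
We prove the two inclusions separately. For the inclusion ``$\subseteq$'' --- that every uniqueness variety of an extremal, non-degenerate, strictly $3$-dimensional $3$-point problem is a member of $\mathcal M$ --- I would first reduce to normalised data. Any $\varphi\in\operatorname{Aut}(\mathbb D^3)$ acts on interpolating functions by $F\mapsto F\circ\varphi^{-1}$, a norm-preserving bijection, and a M\"obius change in the target does the same; hence $\varphi$ carries the uniqueness variety of $\{x_i\mapsto w_i\}$ onto that of $\{\varphi(x_i)\mapsto w_i\}$ and preserves extremality, non-degeneracy and strict $3$-dimensionality. By transitivity of $\operatorname{Aut}(\mathbb D^3)$ on $\mathbb D^3$ we may assume the data is of the form \eqref{NP}, and then non-degeneracy lets me invoke Kosiński's structure theorem to rewrite it as in \eqref{NP2} with $a_1,a_2,a_3\in\mathbb D$, the non-collinearity of $a_1,a_2,a_3$ being precisely the content of ``strictly $3$-dimensional''. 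Let $\alpha=(\alpha_1,\alpha_2,\alpha_3)$ be attached to $a_1,a_2,a_3$ by the formulas of Lemma~\ref{lem:FA} and \eqref{z3}; since the $a_i$ are not collinear none of $\alpha_1,\alpha_2,\alpha_3$ vanishes, and a short computation shows that $\alpha$ satisfies the triangle inequality if and only if the genuine triangle with vertices $a_1,a_2,a_3$ does, so $M_\alpha\in\mathcal M$.

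Next I would bound the uniqueness variety $\mathcal U$ from below. The quadratic $Q(z)=\alpha_1z_1+\alpha_2z_2+\alpha_3z_3-\overline{\alpha}_1z_2z_3-\overline{\alpha}_2z_1z_3-\overline{\alpha}_3z_1z_2$, whose zero set in $\mathbb D^3$ is $M_\alpha$, is irreducible whenever all $\alpha_j\neq 0$ (its homogeneous quadratic part is not a product of two linear forms: the absence of pure squares together with any two nonzero mixed coefficients forces the third mixed coefficient to vanish), so $M_\alpha$ is a smooth, connected, irreducible complex surface. By Lemma~\ref{lem:Fa} the real $3$-dimensional set $\mathcal N_{(a_1,a_2,a_3)}$ lies in $\mathcal U$, and by Lemma~\ref{lem:FA} it lies in $M_\alpha$; since $\mathcal U$ is an analytic set, $\mathcal U\cap M_\alpha$ is an analytic subset of $M_\alpha$ carrying positive real $3$-dimensional measure, hence it cannot be proper, i.e.\ $M_\alpha\subseteq\mathcal U$.

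For the reverse inclusion $\mathcal U\subseteq M_\alpha$ I would use the two interpolating functions $F_1(z)=\Phi_{t,\omega}(\Phi_{s,\nu}(z_1,z_2),z_3)$ and $F_2(z)=\Phi_{t',\omega'}(\Phi_{s',\nu'}(z_1,z_3),z_2)$ constructed above the statement: all interpolants agree on $\mathcal U$, so $\mathcal U\subseteq\{F_1=F_2\}$. Writing $F_1-F_2$ as a single fraction, its denominator is a product of denominators of magic functions and is therefore zero-free on $\mathbb D^3$ (for $(x,y)\in\mathbb D^2$, $a\in[0,1]$, $\eta\in\mathbb T$ one has $|1+\eta(1-a)x+\eta ay|\geq 1-(1-a)|x|-a|y|>0$), while its numerator is a polynomial $N$ vanishing on $M_\alpha$ (since $M_\alpha\subseteq\mathcal U\subseteq\{F_1=F_2\}$), whence $Q\mid N$ because $M_\alpha$ is Zariski dense in the irreducible $\{Q=0\}$. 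The key point is to check that the complementary factor $N/Q$ is again zero-free on $\mathbb D^3$ (again of denominator type); this gives $\{F_1=F_2\}\cap\mathbb D^3=M_\alpha$, hence $\mathcal U=M_\alpha$. If a spurious factor were to survive, I would remove it by intersecting in addition with the agreement set of a third interpolant $F_3(z)=\Phi(\Phi(z_2,z_3),z_1)$ obtained by the symmetric construction.

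For the inclusion ``$\supseteq$'', extremal non-degenerate strictly $3$-dimensional $3$-point problems exist --- they are precisely the ones solved in \cite{Kos}, with non-collinear $a_1,a_2,a_3$ --- so, fixing one, the first part shows its uniqueness variety is some $M_{\alpha_0}\in\mathcal M$; by Theorem~\ref{thrm1} every $M_\alpha\in\mathcal M$ equals $\varphi(M_{\alpha_0})$ for some $\varphi\in\operatorname{Aut}(\mathbb D^3)$, and transporting the chosen Pick problem by $\varphi$ (which preserves all the relevant properties and carries uniqueness varieties equivariantly) exhibits $M_\alpha$ as a uniqueness variety. Alternatively one argues directly: any side-length vector obeying the strict triangle inequality is realised, after rescaling into $\mathbb D$, by a non-degenerate triangle $a_1a_2a_3$, for which a suitable choice of the remaining two nodes in \eqref{NP2} yields (cf.\ \cite{Kos}) an extremal non-degenerate problem whose uniqueness variety is $M_\alpha$ by the first part. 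I expect the main obstacle to be the algebraic identity $\{F_1=F_2\}\cap\mathbb D^3=M_\alpha$: it is only a finite computation with the explicit magic functions, but it requires carefully isolating in the numerator of $F_1-F_2$ the quadratic $Q$ together with only zero-free (denominator-type) factors, and --- if leftover components appear --- discarding them via the third interpolant $F_3$.
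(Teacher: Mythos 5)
Your overall strategy is the same as the paper's: reduce by automorphisms and Kosiński's structure theorem, obtain $M_\alpha\subseteq\mathcal U$ from Lemmas~\ref{lem:Fa} and \ref{lem:FA} together with a dimension/irreducibility argument, and then pin $\mathcal U$ down from above by the agreement set $\{F_1=F_2\}$ of the two explicit magic-function interpolants. Your added remarks -- that the triangle inequality for $\alpha$ comes from the side lengths of the triangle $a_1a_2a_3$ via the formulas in Lemma~\ref{lem:FA}, and that the inclusion ``$\supseteq$'' follows by transporting one problem with Theorem~\ref{thrm1} and equivariance of uniqueness varieties -- are correct and in fact make explicit points the paper leaves implicit. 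However, the heart of the theorem, the identity $\{F_1=F_2\}\cap\mathbb D^3=M_\alpha$, is exactly the step you do not carry out: you describe it as ``a finite computation'' and flag it as the main obstacle, with a fallback (a third interpolant) in case ``leftover components appear''. The paper does perform this computation: writing $F_1-F_2=0$ as the quadratic \eqref{eqdelta} in $z_3$ with coefficients in $(z_1,z_2)$, it verifies (symbolically, in Mathematica) that the discriminant vanishes identically, so that the graph function $z_3(z_1,z_2)$ of $M_\alpha$ is the unique (double) root. Without that verification the proof is incomplete.

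Moreover, the specific check you propose is mis-stated and could not succeed as written. You want to show that the numerator $N$ of $F_1-F_2$ factors as $Q\cdot(\text{zero-free factor})$ on $\mathbb D^3$, where $Q$ is the defining polynomial of $M_\alpha$. But the vanishing of the discriminant of \eqref{eqdelta} means precisely that $N$ vanishes to \emph{second} order along $\{Q=0\}$: up to the linear-in-$z_3$ structure, $N=\tilde A\,Q^2$ for a cofactor $\tilde A(z_1,z_2)$ (the leading coefficient $\alpha\beta'-\alpha'\beta$ divided by the square of the linear factor of $Q$ in $z_3$). Hence $N/Q$ still vanishes on $M_\alpha$ and is certainly not zero-free; what actually has to be checked is that the degree-two cofactor $\tilde A$ has no zeros relevant in $\mathbb D^2$ -- equivalently, that the simultaneous vanishing of the leading coefficient and the constant term of \eqref{eqdelta} contributes no vertical discs to $\{F_1=F_2\}$. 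So your plan, if executed, would run into this double-root structure and you would be forced back to essentially the paper's discriminant computation; the appeal to a third interpolant $F_3$ is a hope rather than an argument, since you give no reason why its agreement set would cut out the putative extra components. In short: same route as the paper, but the decisive algebraic identity is asserted rather than proved, and the factorization you propose to verify is not the one that holds.
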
 
	
	\begin{proof}
		According to \cite{Kos} any non-degenerate, strictly 3-dimensional three point Pick interpolation problem can be written, up to an automorphisms, as
		\begin{equation*}
			\left\{\begin{matrix}
				(xm_{a_1}(x),x m_{a_2}(x),x m_{a_3}(x))\mapsto xm_{\gamma}(x);\\
				(ym_{a_1}(y),y m_{a_2}(y),y m_{a_3}(y))\mapsto ym_{\gamma}(y);\\
				(zm_{a_1}(z),z m_{a_2}(z),z m_{a_3}(z))\mapsto zm_{\gamma}(z).
			\end{matrix}\right.
		\end{equation*}
		We can make some reductions that will simplify the computations we want to carry out. We can choose automorphism of the tridisc $\psi=(\psi_1,\psi_2,\psi_3)$ and an automorphism of the unit disc $\varphi$ such that for $\lambda \in \mathbb{D}$ we have
		\begin{equation*}
			\psi_j(\varphi(\lambda) m_{a_j}(\varphi (\lambda)))=\lambda m_{b_j}(\lambda)
		\end{equation*} for some $b_j$ such that $b_1=0$. Multiplying the variables and coordinates out by a unimodular constants we can assume that $b_2> 0$. 
		Composing interpolation function with a M\"obius map leads to an equivalent problem
		\begin{equation}\label{NP3}
			\left\{\begin{matrix}
				(-x^2,x m_{b_2}(x),x m_{b_3}(x))\mapsto xm_{\delta}(x);\\
				(-y^2,y m_{b_2}(y),y m_{b_3}(y))\mapsto ym_{\delta}(y);\\
				(-z^2,z m_{b_2}(z),z m_{b_3}(z))\mapsto zm_{\delta}(z).
			\end{matrix}\right.
		\end{equation}
		Denote by $V$ the uniqueness variety for (\ref{NP3}). Set 
		$$\Omega:=\lbrace (z_1,z_2,z_3) \in \mathbb{D}^3:F_1(z_1,z_2,z_3) = F_2(z_1,z_2,z_3)\rbrace,$$ 
		where $F_1, F_2$ are interpolating functions described as in the discussion preceding theorem, i.e. there are $s,t\in [0,1]$ and $\omega, \nu \in \mathbb{T}$ such that $F_1(z_1,z_2,z_3)=\Phi_{t,\omega} (\Phi_{s,\nu}(z_1,z_2),z_3)$. Similarly, for properly chosen $t',s'\in [0,1]$ and $\omega',\nu'\in \mathbb{T}$, we can write $F_2(z_1,z_2,z_3)=\Phi_{t',\omega'}(z_1,\Phi_{s',\nu'}(z_2,z_3))$.  
		
		Obviously $V\subset \Omega$. We already learned in Lemmas~\ref{lem:Fa} and \ref{lem:FA} that there exists $\alpha$ such that $M_\alpha \subset V$. We shall show that that 
		\begin{equation}\label{eq:OM}\Omega = M_\alpha,
		\end{equation} which clearly gives a desired equality: $M_\alpha= V$. 
		
		The proof of \eqref{eq:OM} boils down to direct computations. It follows from \cite{Kos} that $F_j(-\lambda^2, \lambda m_{tb_2}(\lambda), \lambda m_{tb_3}(\lambda)) = \lambda m_{t\delta}(\lambda)$ for $\lambda \in \mathbb{D}$ and $t\in (0,1)$. Hence we we lose no generality assuming that $b_2=1$. Then we have 
		\begin{align*}
			\nu&=1, \qquad \omega=\frac{1-s-\overline{b_3}}{1-s-b_3},\\
			\nu'&=\frac{\overline{b_3}}{b_3}, \qquad \omega'=\frac{(1-s')\overline{b_3}-1}{(1-s')b_3-1}.
		\end{align*}
		We define $F_1$ and $F_2$ as before:
		\begin{align*}
			F_1(z_1,z_2,z_3)&=\Phi_{t,\omega}(\Phi_{s,\nu}(z_1,z_2),z_3)
			\\
			&=\Phi_{t,\omega}\left (\frac{sz_1+(1-s)z_2+\nu z_1z_2}{1+\nu(1-s)z_1+\nu sz_2},z_3\right )
			\\
			&=\frac{t\frac{sz_1+(1-s)z_2+\nu z_1z_2}{1+\nu(1-s)z_1+\nu sz_2}+(1-t)z_3+\omega \frac{sz_1+(1-s)z_2+\nu z_1z_2}{1+\nu(1-s)z_1+\nu sz_2}z_3}{1+\omega (1-t)\frac{sz_1+(1-s)z_2+\nu z_1z_2}{1+\nu(1-s)z_1+\nu sz_2}+\omega t z_3}
			\\
			&=\frac{Az_1+Bz_2+Cz_3+Dz_1z_2+Ez_1z_3+Fz_2z_3 +\omega \nu z_1z_2z_3}{1+\omega \nu (\overline{F}z_1+ \overline{E}z_2+ \overline{D} z_3 + Cz_1z_2+ Bz_1z_3+ A z_2z_3)},
		\end{align*}
		where $A=ts$, $B=t(1-s)$, $C=(1-t)$, $D=\nu t$, $E=\omega s+\nu(1-s)(1-t)$ and $F=\nu s(1-t)+\omega (1-s)$. Similarly
		\begin{align*}
			F_2(z_1,z_2,z_3)&=\Phi_{t',\omega'}(\Phi_{s',\nu'}(z_1,z_3),z_2)
			\\
			&=\frac{A'z_1+B'z_2+C'z_3+D'z_1z_2+E'z_1z_3+F'z_2z_3 +\omega \nu z_1z_2z_3}{1+\omega' \nu' (\overline{F'}z_1+ \overline{E'}z_2+ \overline{D'} z_3 + C'z_1z_2+ B'z_1z_3+ A' z_2z_3)},
		\end{align*}
		with $A'=t's'$, $B'=(1-t')$, $C'=t'(1-s')$, $D'=\omega' s'+\nu'(1-s')(1-t')$, $E'=\nu' t'$ and $F'=\nu' s'(1-t')+\omega' (1-s')$.
		
		Consider the equation $F_1-F_2=0$. Some computations lead to
		\begin{equation}\label{eqdelta}
			(\alpha \beta'-\alpha'\beta)z_3^2+(\alpha \gamma'-\alpha'\gamma+\beta' \delta-\beta\delta')z_3+(\gamma' \delta-\gamma\delta')=0,
		\end{equation}
		where
		\begin{align*}
			\alpha=\alpha(z_1,z_2)&=C+Ez_1+Fz_2+\omega \nu z_1z_2,\\
			\beta= \beta(z_1,z_2)&=\omega\nu(\overline{D}+Bz_1+Az_2),
			\\
			\gamma=\gamma(z_1,z_2)&=1+\omega\nu(\overline{F}z_1+\overline{E}z_2+Cz_1z_2),\\
			\delta=\delta(z_1,z_2)&=Az_1+Bz_2+Dz_1z_2,\\
			\alpha'=\alpha'(z_1,z_2)&=C'+E'z_1+F'z_2+\omega' \nu' z_1z_2,\\
			\beta'= \beta'(z_1,z_2)&=\omega'\nu'(\overline{D'}+B'z_1+A'z_2),\\
			\gamma'=\gamma'(z_1,z_2)&=1+\omega'\nu'(\overline{F'}z_1+\overline{E'}z_2+C'z_1z_2),\\
			\delta'=\delta'(z_1,z_2)&=A'z_1+B'z_2+D'z_1z_2.
		\end{align*}
		As mentioned above, the uniqueness variety for (\ref{NP3}) contains the surface $$M_\alpha=\{(z_1, z_2, z_3(z_1, z_2))\},$$ where $z_3$ is a rational function of the form (\ref{z3}). Thus $z_3=z_3(z_1, z_2)$ is one of the solutions of equation (\ref{eqdelta}). Elementary but tedious computations (we carried them out in Mathematica) shows that discriminant of this quadratic polynomial is in fact equal to $0$. This shows that $\Omega$ coincides with $M_\alpha$.
	\end{proof} 
	
	In this part we will describe the Shilov boundary of $M_\alpha$. Let us first recall the admired Shilov result. Suppose $K$ is a compact space, $\mathcal{C}(K)$ an algebra of continuous functions on $K$ and $\mathcal{A}(K)$ is any subalgebra of $\mathcal{C} (K)$ that separates points. We say that a subset $\Gamma$ of $K$ is a boundary for $\mathcal{A}(K)$ if for each $f\in \mathcal{A}(K)$ we can find $w\in \Gamma$ so that
	$$|f(w)|=\max_{x\in K} |f(x)|.$$
	The Shilov Theorem \cite{Shi} says that the intersection of all boundaries is non-empty and it is again a boundary for $\mathcal{A}(K)$. We call it a Shilov boundary.
	
	\begin{prop}
		For any $\alpha$ satisfying the triangle inequality, the Shilov boundary of the surface $M_\alpha$ with respect to the algebra $\mathcal{O}(M_\alpha)\cap \mathcal{C}(\overline{M}_\alpha)$ is equal to $\overline{M}_\alpha \cap \mathbb{T}^3$. 
	\end{prop}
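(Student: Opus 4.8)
Write $\Gamma$ for the Shilov boundary of $\mathcal{A}:=\mathcal{O}(M_\alpha)\cap\mathcal{C}(\overline{M}_\alpha)$; the plan is to prove the inclusions $\overline{M}_\alpha\cap\mathbb{T}^3\subseteq\Gamma$ and $\Gamma\subseteq\overline{M}_\alpha\cap\mathbb{T}^3$ separately. The first, soft, inclusion follows from exhibiting peak points: for $q=(q_1,q_2,q_3)\in\overline{M}_\alpha\cap\mathbb{T}^3$ put $g_q(z):=\tfrac18(1+\overline{q_1}z_1)(1+\overline{q_2}z_2)(1+\overline{q_3}z_3)$, whose restriction to $\overline{M}_\alpha$ lies in $\mathcal{A}$. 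One has $|g_q|\le 1$ on $\overline{\mathbb{D}}^3\supseteq\overline{M}_\alpha$, and for $z\in\overline{\mathbb{D}}^3$ the equality $|g_q(z)|=1$ forces $\overline{q_j}z_j=1$, i.e.\ $z_j=q_j$, for every $j$. Since $q\in\overline{M}_\alpha$ and $g_q(q)=1$, the point $q$ is a peak point of $\mathcal{A}$, hence lies in every closed boundary (a closed boundary $\Gamma'$ with $q\notin\Gamma'$ would, by compactness, satisfy $\max_{\Gamma'}|g_q|<1=\max_{\overline{M}_\alpha}|g_q|$), so $q\in\Gamma$.

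For the reverse inclusion I would prove that $\overline{M}_\alpha\cap\mathbb{T}^3$ is itself a closed boundary for $\mathcal{A}$; minimality of $\Gamma$ then gives $\Gamma\subseteq\overline{M}_\alpha\cap\mathbb{T}^3$. It suffices to produce, through each $p\in M_\alpha$, an analytic disc $\varphi_p\colon\overline{\mathbb{D}}\to\overline{M}_\alpha$, holomorphic on $\mathbb{D}$, continuous on $\overline{\mathbb{D}}$, with $\varphi_p(0)=p$ and $\varphi_p(\mathbb{T})\subseteq\overline{M}_\alpha\cap\mathbb{T}^3$: given $f\in\mathcal{A}$, the maximum principle applied to $f\circ\varphi_p$ yields $|f(p)|\le\max_{\mathbb{T}}|f\circ\varphi_p|\le\sup_{\overline{M}_\alpha\cap\mathbb{T}^3}|f|$, and taking the supremum over $p\in M_\alpha$ and using continuity of $f$ on $\overline{M}_\alpha$ gives $\sup_{\overline{M}_\alpha}|f|=\sup_{\overline{M}_\alpha\cap\mathbb{T}^3}|f|$. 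The discs come from the surfaces $\mathcal{N}_{(a_1,a_2,a_3)}$ of Lemmas~\ref{lem:Fa}--\ref{lem:FA}: such a surface is the union over $t\in(0,1)$ of the discs $d_t\colon\lambda\mapsto(\lambda m_{ta_1}(\lambda),\lambda m_{ta_2}(\lambda),\lambda m_{ta_3}(\lambda))$, and every $d_t$ maps $0$ to the origin, has image $d_t(\mathbb{D})\subseteq\mathcal{N}_{(a_1,a_2,a_3)}$, extends continuously to $\overline{\mathbb{D}}$, and---each $m_{ta_j}$ being an automorphism of $\mathbb{D}$, so each coordinate $\lambda m_{ta_j}(\lambda)$ is a finite Blaschke product---sends $\mathbb{T}$ into $\mathbb{T}^3$. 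By Theorem~\ref{uniq} every member $M$ of $\mathcal{M}$ is a uniqueness variety of a non-degenerate maximal extremal $3$-point Pick problem, hence contains some $\mathcal{N}_{(a_1,a_2,a_3)}$; for such $M$ one then has $d_t(\overline{\mathbb{D}})\subseteq\overline{M}$ and $d_t(\mathbb{T})\subseteq\overline{M}\cap\mathbb{T}^3$, so $d_t$ is a disc of the required type through the origin. For a general $p\in M_\alpha$ choose $m\in\operatorname{Aut}(\mathbb{D}^3)$ with $m(p)=0$; by the automorphism-stability of $\mathcal{M}$ (see \cite{KosZw}) $m(M_\alpha)=M_\gamma$ for some $\gamma$ satisfying the triangle inequality, $M_\gamma$ carries such a disc through $0$, and composing it with $m^{-1}$---which extends to a homeomorphism of $\overline{\mathbb{D}}^3$ preserving $\mathbb{T}^3$ and carrying $\overline{M}_\gamma$ onto $\overline{M}_\alpha$---produces the required $\varphi_p$ through $p$.

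The delicate point is this disc-covering step: one must be sure that, as the parameters range, the discs actually sweep out the whole relatively-open surface $M_\alpha$, and one must keep careful track of which $\alpha$ corresponds to which parameter set $(a_1,a_2,a_3)$ when invoking Lemma~\ref{lem:FA} and Theorem~\ref{uniq}. The boundary behaviour itself causes no difficulty: the coordinates of $d_t$ are genuine finite Blaschke products, so $d_t(\mathbb{T})\subseteq\mathbb{T}^3$ holds at every point, not merely almost everywhere. Finally, it is worth recording that by Theorem~\ref{thrm1} it is enough to establish the Proposition for one model, say $M_{(1,1,1)}$: an automorphism of $\mathbb{D}^3$ taking $M_\alpha$ onto $M_\beta$ extends to a homeomorphism of the closures preserving $\mathbb{T}^3$, hence induces an isometric isomorphism $\mathcal{A}(\overline{M}_\beta)\to\mathcal{A}(\overline{M}_\alpha)$ carrying Shilov boundary to Shilov boundary and $\overline{M}_\alpha\cap\mathbb{T}^3$ onto $\overline{M}_\beta\cap\mathbb{T}^3$.
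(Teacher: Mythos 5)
Your first inclusion (explicit peak functions $g_q$) and your general strategy for the reverse inclusion (show $\overline{M}_\alpha\cap\mathbb{T}^3$ is itself a boundary by sweeping $M_\alpha$ with analytic discs whose boundaries lie in $\mathbb{T}^3$, then apply the maximum principle) are sound in outline, but the disc-covering step --- which you yourself flag as the delicate point --- contains a genuine gap. The claim ``by Theorem~\ref{uniq} every member $M$ of $\mathcal{M}$ is a uniqueness variety, hence contains some $\mathcal{N}_{(a_1,a_2,a_3)}$'' is a non sequitur and is in fact false. Lemma~\ref{lem:Fa} produces an $\mathcal{N}$ inside the uniqueness variety only for the \emph{normalized} problem \eqref{NP2}; a general extremal problem (and hence a general member of $\mathcal{M}$, which is obtained from the normalized situation only after composing with an automorphism) contains an automorphic \emph{image} of such an $\mathcal{N}$, whose discs pass through the image of the origin, not through $0$. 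Concretely, the formulas in Lemma~\ref{lem:FA} force $|\alpha+\beta|=\bigl|\tfrac{a_1-a_2}{\overline{a_2}-\overline{a_1}}\bigr|=1$, so only a proper subfamily of $\mathcal{M}$ arises this way; and $M_{(1,1,1)}$ contains no $\mathcal{N}_{(c_1,c_2,c_3)}$ at all: writing $z_j=\lambda m_{tc_j}(\lambda)$ and expanding the defining equation $\sum z_j=\sum_{j<k}z_jz_k$ in powers of $\lambda$, the order-$\lambda^2$ coefficient requires $t^2\bigl(\sum_j|c_j|^2-\sum_{j<k}c_jc_k\bigr)=3$ for every $t\in(0,1)$, which is impossible. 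Since your treatment of a general point $p\in M_\alpha$ moves $p$ to $0$ by an automorphism and then needs a torus-boundary disc through $0$ in the new variety $M_\gamma$, this is exactly the statement you have not proved: neither ``discs through $0$ in every $M_\gamma$'' nor ``discs through every point of one fixed $M_\gamma$'' follows from Lemmas~\ref{lem:Fa}--\ref{lem:FA} and Theorem~\ref{uniq} (the $\mathcal{N}$ you do get only covers a $3$-real-dimensional subset of a $4$-real-dimensional variety).

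The fact you need is likely true --- for instance $\lambda\mapsto(\lambda,-\lambda,-\lambda^2)$ is a disc through $0$ in $M_{(1,1,1)}$ with boundary in $\mathbb{T}^3$, and results of \cite{KosZw} on these varieties being swept by extremal discs point in this direction --- but it requires an argument you do not supply. Note also that the paper avoids this machinery altogether and argues in the opposite direction: after reducing to $\alpha=(1,1,1)$, it takes a hypothetical Shilov-boundary point $\zeta\in\overline{M}_\alpha\setminus\mathbb{T}^3$ (say $|\zeta_1|<1$, $|\zeta_2|=1$, whence $\operatorname{Re}\zeta_2\le\tfrac12$) and exhibits an analytic disc $z\mapsto\bigl(z,\zeta_2,\tfrac{z+\zeta_2-z\zeta_2}{z+\zeta_2-1}\bigr)$ lying in $\partial M_\alpha$ and passing through $\zeta$, which rules such a point out; no covering of the interior by discs is needed. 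To repair your route you must either prove the disc-covering statement for every $M_\gamma$ through $0$ (or through every point of one model), or switch to an exclusion argument of the paper's type.
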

	\begin{proof}
		
		Without loss of generality we can assume that $\alpha = (1,1,1)$. It is obvious that $ \overline{M}_\alpha \cap \mathbb{T}^3\subset \partial_SM_\alpha$, where $\partial_SM_\alpha$ stands for the Shilov boundary of $M_\alpha$. Note that this set is non-empty for any choise of $\alpha$. We shall show that in $\overline{M}_\alpha$ there is no more peak points.
		
		Suppose there is a point $(\zeta_1,\zeta_2,\zeta_3) \in \partial_S M_\alpha$ which does not belong to the three-torus $\mathbb{T}^3$. We can assume (up to a permutation of coordinates) that $|\zeta_1|< 1$ and $|\zeta_2|=1$. Then the condition 
		$|\zeta_1+\zeta_2-\zeta_1 \zeta_2|^2\leq |\zeta_1+\zeta_2-1|^2$ is satisfied precisely when $2\operatorname{Re}(\zeta_2)(1-|\zeta_1|^2)\leq 1-|\zeta_1|^2.$ Hence $\operatorname{Re}(\zeta_2)\leq \frac{1}{2}$. We shall show that the following analytic disc $$\mathbb D\ni z \mapsto \xi(z,\zeta_2):=\left (z,\zeta_2,\frac{z+\zeta_2-z\zeta_2}{z+\zeta_2-1}\right)$$ belongs to the boundary $\partial M_\alpha$, which will clearly give a contradiction.
		
		To prove our claim take any sequence $w_n\in \mathbb{D}$ such that $\operatorname{Re}(w_n)<\frac{1}{2}$ and $w_n\to \zeta_2$ as $n$ goes to infinity. Then, for $n$ large enough, a strict inequality
		$$2\operatorname{Re}(z)(1-|w_n|^2)+2\operatorname{Re}(w_n)(1-|z|^2)<1-|z|^2|w_n|^2$$
		is satisfied. This means, that for such $n$ an inequality $\left |\frac{z+w_n-zw_n}{z+w_n-1}\right |<1$ is also true. Since the boundary is closed, the point $\xi (z,\zeta_2)$ is in $\partial M_\alpha$ and the proof is finished.
	\end{proof}
	
	\textbf{Acknowledgments.} The author would like to express his gratitude to the anonymous referee for his or her careful review and insightful comments that improved the shape of the paper.

 \textsc{\\ \\ \indent Institute of Mathematics, Jagiellonian University, Łojasiewicza 6, 30-348
	Kraków, Poland. }

\textit{E-mail address:}  \href{mailto:krzysztof.maciaszek@im.uj.edu.pl}{krzysztof.maciaszek@im.uj.edu.pl} 
	\end{document}